   \newtheorem{thm}{Theorem}[section]
   \font\msbmnormal=msbm10 \font\msbmpeq=msbm7 \font\msbmmuypeq=msbm5
   \numberwithin{equation}{section} 
   \title{Matrix differential equations and scalar polynomials satisfying higher order
   recursions.
   \footnote{The
      work of the first author is  partially supported by  D.G.E.S,
      ref. MTM-2006-13000-C03-01, FQM-262 (\textit {Junta de
   Andaluc\'{\i}a}), that of the second
   author is partially supported by NSF grant \#DMS 0204682.}}
   \author{Antonio J. Dur\'{a}n$^{\dagger}$ and  F. Alberto
   Gr\"unbaum$^{\ddagger}$\\
       \footnotesize $\dagger$  \footnotesize
        \  Departamento de An\'{a}lisis Matem\'{a}tico.
       Universidad de Sevilla \\
       \footnotesize Apdo (P. O. BOX) 1160. 41080 Sevilla. Spain.
   duran@us.es \\
       $\ddagger$ \footnotesize\ Department of Mathematics. University of
   California,  Berkeley \\
        \footnotesize Berkeley,CA 94720  U.S.A. grunbaum@math.berkeley.edu
   \\
   \ \ }
   \date{}
\begin{document}
   \maketitle

   \begin{abstract}

We show that any scalar differential operator with a family of polynomials
as its common eigenfunctions leads canonically to a matrix differential operator
with the same property. The construction of the corresponding family of matrix valued polynomials
has been studied in \cite{D1,D2,DV} but the existence of a differential
operator having them as common eigenfunctions had not been considered 
This correspondence goes only one way and most matrix valued situations do not arise in this fashion.

We illustrate this general construction with a few examples.
In the case of some families of scalar valued polynomials introduced in \cite{GH} we
take a first look at the algebra of all matrix differential operators that share
these common eigenfunctions and uncover a number of phenomena that are new to the matrix valued case.

    \end{abstract}

\bigskip

\section{Introduction}

\bigskip

The last few years have witnessed some progress in the problem of finding
explicit families of matrix valued orthogonal polynomials that are joint
eigenfunctions of some fixed differential operator with matrix coefficients,
and a (possibly matrix valued) eigenvalue that depends on the degree of the
polynomial. Most of these examples involve differential operators of order
two, an issue raised in the case of symmetric operators in \cite{D3}.
The subject of matrix valued orthogonal polynomials, without any reference to
differential operators, was initiated in two papers by
M.G. Krein, \cite {K1,K2}. The search of situations where these polynomials
satisfy extra properties, such as the one above, makes it more likely that they will be useful in applications.

There are by now four main ways to search for cases with this particular extra property:
appealing to some group representation structure, as in
\cite{GPT1,GPT2,GPT3,GPT4,PT}, solving (in the case of symmetric differential operators) an appropriate set of
differential equations as in \cite{DG1,DG2,Du4,G,GPT3} or a set of moment equations as in \cite{D3}, and  finally
and so far much less succesfully, solving the so called {\em
ad-conditions} as in \cite{CG3}. One can see that these ad-conditions
are necessary and sufficient for the existence of a differential
operator,
 but finding all solutions remains a huge task. Most of the examples obtained by this route correspond most likely to an "orthogonality functional" that is not
given by a positive definite matrix as envisaged by M. G. Krein. 
It appears reasonable at this point to consider in some detail any method that gives explicit examples.

\bigskip

One such path is taken in \cite {CG1}
where one finds instances where there is a differential operator of order one,
and in \cite{CG2}
where one explores the algebra of all differential operators going along
with a fixed family of differential operators.

\bigskip

In this paper we take the initial steps in yet another path. This comes about by putting together
two previously unrelated lines of work.

The first one comes from studying
the relation between scalar valued polynomials satisfying higher order
recursion relations and matrix valued polynomials.
This has been considered in \cite{D1,D2,DV}, and is reviewed here in section 2. These papers do not look into
the issue of possible differential equations satisfied by these polynomials.

The second one comes from an extension of the
well known Krall-Laguerre and Krall-Jacobi  orthogonal polynomials.
This has been considered 
in \cite{GH} to produce, via Darboux factorization, families of (scalar) polynomials
that (in general) satisfy five term recursion relations and fourth order
differential operators with polynomial coefficients. The relevant parts of this
construction are recalled in section 4.

\bigskip

These two tools are combined in this paper as follows:
we first establish a very general result that produces one matrix valued differential
operator every time that we start with a scalar one (but not necessarily the other way around, as spelled out later).
This general result is first illustrated in the
case of the Laguerre polynomials where the orders of all operators is equal to two.
This completes the material in section 3.
We then illustrate, in section 4, the case of higher order operators by looking at
the extended scalar valued  Krall-Laguerre and Krall-Jacobi
polynomials.

\bigskip

A sligthly more detalied description of the contents of sections 3 and 4 follows.
Given specific families of scalar valued polynomials
we consider the corresponding family of matrix valued polynomials as in \cite{D1,D2,DV}.
We find that in the Krall-Laguerre case there is (up to scalars)
only one differential operator with matrix valued coefficients (and of order four) going along with each one
of these examples. In particular, there is no lower order operator with this property.
By going up to
order six we see that the corresponding algebra cannot be generated by the fourth order operator constructed in the
canonical fashion considered in the first part of the paper.
The case of the extended Krall-Jacobi polynomials leads to a richer variety of examples. In particular we get a third order differential operator in the matrix
case which cannot yield a third order scalar differential operator.
All of this is discussed in several subsections of section 4.

\bigskip

To better compare the results of this paper with previously known material notice that
the situations uncovered so far feature an algebra that includes differential operators of low order,
typically one or two. There are even situations where one finds a nontrivial (i.e. not a scalar multiple of the
identity)
differential operator of order zero.
One could wonder as to the existence of situations where
the lowest order operators in the algebra
needs to be of order higher than two.
The group theoretical
line of attack mentioned above,
is not likely to produce
examples of this kind, since in this case there is a Laplace-Beltrami operator around. Solving the
appropriate differential equations for the higher
order case appears to be a difficult task, while solving the corresponding ad-conditions
appears entirely hopeless at this point.

As a by-product of the results in this paper
we produce
examples of the type envisaged above, which as far as we know have not been seen before.

\bigskip

Another new result is the observation that we have here
a one way street; while the results in \cite{D1,D2,DV}
establish a reversible link between
scalar valued and matrix valued polynomials we see here that if the scalar family is made up of common eigenfunctions
of some differential operators this property extends to the matrix valued polynomials but not (necessarily) the other
way round.

\bigskip

       We close this introduction with some remarks, aimed at a broad audience, about some of the
       possible uses of matrix valued orthogonal polynomials.
    
       One of the many applications of the theory of scalar orthogonal polynomials is to a detailed study
       of a special kind of Markov chains with state space given by the non-negative integers and with a
       one step transition probability matrix that allows only moves to the nearest neighbours. The basic
       paper here is \cite{KMcG}. These authors derive an expression (based on the spectral analysis of the
       corresponding difference operator) for the n-step transition probability between states $i$ and $j$
       in terms of the orthogonal polynomials going along with the one step transition probability matrix.
       There are several advantages to such a formula. For instance if one knows the orthogonality measure
       one can compute the probability of going from states $i,j$ with $i \leq j $ in any number
       of steps from the knowledge of the first $j$ rows of the one step transition matrix. Other uses of
       this connection with orthogonal polynomials lie a bit deeper: by using
       the Stieltjes transform of the orthogonality measure one can see that (in principle) the knowledge
       of the probability of going from state $0$ to state $0$ in an arbitrary number of steps 
       determines the full n-step transition probability matrix for any $n$.

\bigskip

       In the last section of the paper by Karlin and McGregor mentioned above, these authors consider the
       problem of using their methods , based on a three term recursion relation that is semi-infinite, to
       the case of a doubly infinite three term recursion. Although they stop short of introducing 
       explicitly the notion of matrix valued orthogonal polynomials they compute in detail the orthogonality
       matrix measure that is relevant to this problem. The full blown analysis of the problem of a random
       walk (with nearest neighbour transitions) on the set of all integers that was the concern of the
       last section in \cite{KMcG} has been done in detail later in \cite{DRSZ}, \cite{G1}. This extension from
       the study by means of scalar valued orthogonal polynomials
       of birth-and-death processes to the study of so called Quasi-birth-and-death
       processes by means of matrix valued orthogonal polynomials is already a concrete example of a good
       motivation for this study.

       For a situation where the matrix valued orthogonal polynomials happen to satisfy matrix valued
       differential equations which along with the orthogonality measure are an outgrowth of the work started in \cite{GPT1}, 
       see \cite{GI}.

\bigskip

       The best known families of scalar valued orthogonal polynomials, connected with
       the names of 
 Hermite, Laguerre, 
and
Jacobi owe their importance to the differential equations they satisfy and thus they
appear in connection with simple situations involving the Schr\"odinger equation.
In the scalar case the only
possible examples of orthogonal polynomials with a differential operator floating around  are the ones mentioned above. 
On the other hand the situation in the matrix valued case is much more complicated
and it opens the door to an embarrassment
of
riches in terms of examples. The study carried out here is an effort to produce more examples of this novel situation
as a step towards other applications.

\section{Scalar and matrix valued polynomials}

The relation between scalar polynomials satisfying higher order recursions
and matrix valued ones satisfying three term recursions was found by one of us (see
\cite{D1} and \cite{D2}) and it is well described in \cite{DV}.

The procedure shows a way to produce a vector polynomial from
an scalar one $p(x)$ by considering the decomposition of $p(x)$
congruent with the residues of the exponent of $x$ in $p(x)$ modulo $N$.

This is done by using the operators $R_{N,m}$, $m=0,\cdots , N-1$, defined by
\begin{align}\label{operadoresR}
R_{N,m}(p)(x) = \sum_n \frac {p^{(nN+m)}(0)}{(nN+m)!} x^n,
\end{align}
i.e., the operator $R_{N,m}$ takes from $p$ those powers congruent with $m$ modulo
$N$, then removes the common factor $x^m$ and changes $x^N$ to $x$.

From a sequence of scalar polynomials $(p_n)_n$
we can then define a sequence of matrix valued polynomials $P_n(x)$, $n=0,1,\cdots $, by the recipe
\begin{equation} \label{matrixpol}
P_n(x) = \begin{pmatrix}
R_{N,0}(p_{nN})(x) &\dots &R_{N,N-1}(p_{nN})(x) \\
R_{N,0}(p_{nN+1})(x) &\dots &R_{N,N-1}(p_{nN+1})(x) \\
\vdots & &\vdots \\
R_{N,0}(p_{nN+N-1})(x) &\dots &R_{N,N-1}(p_{nN+N-1})(x)
\end{pmatrix}\ .
\end{equation}

This process can be reversed since for any polynomial $p(x)$ we have

$$ p(x)= R_{N,0}(p)(x^N) + xR_{N,1}(p)(x^N)+...+x^{N-1}R_{N,N-1}(p)(x^N)$$

\noindent

It turns out that if
$p_n(x)$, $n = 0,1,2,\dots $, satisfy the
$(2N+1)$-term recurrence relation
\begin{equation}\label{hor}
x^Np_n(x) = c_{n,0}p_n(x) + \sum_{k=1}^N [ c_{n,-k}p_{n-k}(x) +  c_{n,k}p_{n+k}(x)],
\end{equation}
($p_n(x)$ vanishes if $n$ is negative), then the matrix valued ones $P_n$,
$n = 0,1,2,\dots $, satisfy the following three term matrix recurrence relation

\begin{equation}\label{ttrr}
xP_n(x)=A_nP_{n+1}(x)+B_nP_n(x)+C_nP_{n-1}(x)
\end{equation}

where the matrix coefficients are nothing but the $N\times N$
blocks of the $(2N+1)$ banded matrix associated to the scalar family $(p_n)_n$
 featuring in the higher order
recursion that defines the $p_n(x)$. This semi-infinite banded matrix  has rows that look as follows

\bigskip
first row

$$  c_{0,0}, c_{0,1},\cdots , c_{0,N},0,0,0,\cdots $$

second row

$$  c_{1,-1}, c_{1,0}, c_{1,1}, c_{1,2}, \cdots ,c_{1,N},0,0,0,\cdots $$

third row

$$  c_{2,-2}, c_{2,-1}, c_{2,0}, c_{2,1}, c_{2,2}, c_{2,3}, \cdots , c_{2,N}$$

\bigskip

and for i large enough, the ${(i+1)}_{th}$ row looks like

$$  0,0,\cdots ,0,c_{i,-N},c_{i,-N+1},\cdots ,c_{i,-1},c_{i,0},c_{i,1},\cdots ,c_{i,N},0,0,\cdots $$

Given a block-tridiagonal matrix (that defines, by using (\ref{ttrr}), a sequence of matrix
valued polynomials) one can always assume, by using an $n$ dependent unitary matrix, that the off-diagonal blocks are
triangular, and thus the matrix becomes a banded matrix with scalar entries. This shows that
using the operators $R_{N,m}$, $m=0,\cdots ,N-1$, one can also go from matrix valued polynomials
satisfying a three term recurrence relation like (\ref{ttrr}) to scalar valued polynomials satisfying
a higher recurrence relation as in (\ref{hor}).

\bigskip

The polynomial $x^N$ in the formula (\ref{hor}) does not play any special role. One can replace it by $(x-a)^N$ for an appropriate choice of the constant $a$;
in this case, one has to use the operators
$$
R_{N,m,a}(p)(x) = \sum_n \frac {p^{(nN+m)}(a)}{(nN+m)!} x^n,
$$
instead of the $R_{N,m}$ defined in (\ref{operadoresR}) (see \cite{DV}, pp. 269, for the general case when in (\ref{hor}),
the polynomial $x^N$ is substituted by a generic polynomial $h(x)$).

\bigskip

We say that a higher order recursion (\ref{hor}) for $(p_n)_n$ is symmetric if $c_{n,-k}=\bar c_{n-k,k}$,
and, analogously, that the matrix three term recurrence relation (\ref{ttrr}) is symmetric if
$A_n=C_{n+1}^*$. As it was shown in \cite{DV},
a symmetric higher order recursion gives a symmetric matrix three term recurrence relation.
\bigskip

The symmetric case  corresponds to a Hermitian $(2N+1)$-banded matrix and  the associated matrix polynomials
defined by (\ref{matrixpol}) are then orthonormal with respect to a weight matrix (i.e. a positive
definite matrix of measures having finite moments of any order).

Given a higher order scalar recursion that is not symmetric there are two ways to try to
symmetrize it. Starting from the polynomials $p_n(x)$ defined by the recursion one
can define new polynomials
$r_n(x)=\gamma_np_n(x)$, and look for a convenient choice of the sequence of numbers
$(\gamma _n)_n$ so that the corresponding higher order recursion for $(r_n)_n$ is symmetric.

A different, and more general approach consists of producing from the scalar polynomials $p_n(x)$ the 
matrix polynomials $P_n(x)$ as above. Now one considers 
a new sequence of matrix polynomials $R_n(x)=\Gamma _nP_n(x)$, and looks
for a convenient choice of the sequence of matrices $(\Gamma_n)_n$ so that the matrix three term recurrence relation for $(R_n)_n$ is
symmetric. It is easy to see that the first attempt is a special case of the second one, which consists of insisting that the matrices $\Gamma_n$ be diagonal.

\section{Producing matrix differential equations out of scalar ones}

Our starting point is a sequence $(p_n)_n$ of polynomials, with deg $p_n=n$,
satisfying the differential equation
\begin{equation}\label{ede}
\sum_{l=1}^ma_l(x)p_n^{(l)}(x)=\gamma _np_n(x),
\end{equation}
where $a_l$, $l=1,\cdots, m$, are polynomials of degree not bigger than
$l$ and $\gamma _n$ are real numbers.



We split up the polynomials $p_n$ as explained
in the previous section to get a sequence of matrix polynomials $P_n$ of size $N\times N$.
We now show that these matrix polynomials inherit a matrix differential equation
from the differential equation (\ref{ede}) for the scalar polynomials $p_n$.
The proof is constructive, so that it produces an explicit expression
for the differential coefficients $A_k$, $k=0,\cdots , m$, in the new differential operator.

For the sake of simplicity, we show the result in full only for the case $N=2$ (the general
case can be proved in the same way).

\begin{thm}\label{teo2.1}
Assume that $(p_n)_n$ is a sequence of scalar polynomials satisfying a  differential equation
of the form (\ref{ede})
where $a_l$, $l=1,\cdots, m$, are polynomials of degree not bigger than
$l$ and $\gamma _n$ are numbers.
Then the matrix polynomials $P_n$ defined from $p_n$ by (\ref{matrixpol}) satisfy
the  differential equation
\begin{align*}
\sum_{k=0}^mP_n^{(k)}(x)A_k(x)=\Gamma _nP_n(x),
\end{align*}
where the coefficients $A_k$, $k=0,\cdots , m,$ are given by
\begin{align*}
A_k&=\left( \sum_{l=k}^mC_{k,l}\right),\\
C_{k,l}(x)&=\begin{pmatrix}
b_{k,l,0}(x)&b_{k,l,1}(x)\\ xb_{k,l,1}(x)+lb_{k,l-1,0}(x)&
b_{k,l,0}(x)+lb_{k,l-1,1}(x)
\end{pmatrix}\begin{pmatrix}a_{l,0}(x)&a_{l,1}(x)\\xa_{l,1}(x)&a_{l,0}(x)\end{pmatrix},\\
b_{k,l}&=\left(\frac{(-1)^k}{k!}\sum_{j=[(l+1)/2]}^k(-1)^j{k\choose j}2j(2j-1)\cdots (2j-l+1)\right)x^{2k-l},\\
\end{align*}
(as usual $[x]$ denotes the integer part of $x$)
and we write $p_0=R_{2,0}(p)$ and $p_1=R_{2,1}(p)$, where $R_{2,0}$
and $R_{2,1}$ are the operators defined in (\ref{operadoresR}) for $N=2$.

The eigenvalues $\Gamma_n$, $n\ge 0$, are given by
$$
\Gamma_n=\begin{pmatrix}\gamma_{nN}&0&\cdots &0\\
0&\gamma_{nN+1}&\cdots &0\\\vdots&\vdots&\ddots&\vdots \\
0&0&\cdots&\gamma_{nN+N-1}\end{pmatrix}.
$$
\end{thm}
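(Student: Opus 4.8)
The plan is to trace how the scalar operator $\sum_{l=1}^m a_l(t)\,\frac{d^l}{dt^l}$ interacts with the even/odd splitting that underlies $R_{2,0},R_{2,1}$, and then simply to reduce the identity $\sum_l a_l p_n^{(l)}=\gamma_n p_n$ through that splitting. I write the original variable as $t$ and the reduced variable as $x$, so that $R_{2,m}$ amounts to extracting the residue class $m$ and substituting $t^2\to x$; thus every polynomial splits as $p(t)=p_0(t^2)+t\,p_1(t^2)$ with $p_0=R_{2,0}(p)$, $p_1=R_{2,1}(p)$. By (\ref{matrixpol}) the two rows of $P_n$ are $(R_{2,0}(p_{2n}),R_{2,1}(p_{2n}))$ and $(R_{2,0}(p_{2n+1}),R_{2,1}(p_{2n+1}))$, so I will run $p_{2n}$ and $p_{2n+1}$ through the computation in parallel. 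Since both satisfy (\ref{ede}) with eigenvalues $\gamma_{2n}$ and $\gamma_{2n+1}$, the right-hand side will automatically assemble into $\Gamma_nP_n$ with $\Gamma_n=\mathrm{diag}(\gamma_{2n},\gamma_{2n+1})$, which settles the eigenvalue claim at once.

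The crux is a single master identity describing how an $l$-fold $t$-derivative of an even function collapses under $t^2\to x$. First I would prove
\[ \frac{d^l}{dt^l}\bigl[q(t^2)\bigr]=\sum_{k=0}^{l} b_{k,l}(t)\,q^{(k)}(t^2),\qquad b_{k,l}(t)=\beta_{k,l}\,t^{2k-l}, \]
where $q^{(k)}$ denotes the $k$-th derivative in the reduced variable. Differentiating once more yields the recurrence $b_{k,l+1}=b_{k,l}'+2t\,b_{k-1,l}$, i.e. $\beta_{k,l+1}=(2k-l)\beta_{k,l}+2\beta_{k-1,l}$; alternatively, testing the identity on the monomials $q(x)=x^n$ reduces it to the purely combinatorial statement $(2n)_l=\sum_k \beta_{k,l}\,(n)_k$ relating falling factorials, whose solution by Newton's forward-difference formula is exactly $\beta_{k,l}=\frac{(-1)^k}{k!}\sum_{j}(-1)^j\binom{k}{j}(2j)_l$, the bracketed coefficient in the statement (the lower limit $[(l+1)/2]$ merely discards the vanishing terms with $2j<l$). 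The companion identity for the odd part is then immediate from the Leibniz rule, since only the first two derivatives of the factor $t$ survive:
\[ \frac{d^l}{dt^l}\bigl[t\,q(t^2)\bigr]=t\,\frac{d^l}{dt^l}\bigl[q(t^2)\bigr]+l\,\frac{d^{l-1}}{dt^{l-1}}\bigl[q(t^2)\bigr]=\sum_{k}\bigl(t\,b_{k,l}(t)+l\,b_{k,l-1}(t)\bigr)q^{(k)}(t^2). \]

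With these two formulas I would assemble the matrix coefficients. Expanding $a_l(t)=a_{l,0}(t^2)+t\,a_{l,1}(t^2)$ and applying the identities to $p_{2n}^{(l)}=\frac{d^l}{dt^l}[R_{2,0}(p_{2n})(t^2)]+\frac{d^l}{dt^l}[t\,R_{2,1}(p_{2n})(t^2)]$, I multiply by $a_l(t)$, separate the resulting function of $t$ into even and odd parts, and finally replace $t^2$ by $x$ (that is, apply $R_{2,0}$ and $R_{2,1}$). Here one must note that $b_{k,l}$ itself lives in the original variable; its $R_{2,0}$- and $R_{2,1}$-parts are precisely what the statement denotes $b_{k,l,0}$ and $b_{k,l,1}$. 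Because $A_k$ multiplies $P_n^{(k)}$ on the right, the coefficients land with rows indexed by the source component and columns by the output component, so the derivative-and-reduce step reproduces exactly the first matrix factor in $C_{k,l}$ (the product-rule cross terms $l\,b_{k,l-1}$ occupying its second row), while the product-by-$a_l$ step contributes the second factor $\bigl(\begin{smallmatrix}a_{l,0}&a_{l,1}\\ x a_{l,1}&a_{l,0}\end{smallmatrix}\bigr)$; composition in the right-action convention then gives $C_{k,l}=M_{k,l}N_l$, and summing over all $l$ that can feed a given order $k$ (which forces $l\ge k$, since $b_{k,l}=0$ for $k>l$) yields $A_k=\sum_{l\ge k}C_{k,l}$. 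Carrying the second row $p_{2n+1}$ through identically stacks the two reduced equations into the claimed matrix differential equation.

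I expect the main obstacle to be the bookkeeping in this last assembly step rather than any deep difficulty: one must keep straight four interleaved dichotomies — even versus odd part of the input, even versus odd part of $b_{k,l}$ and of $a_l$, the cross terms $l\,b_{k,l-1}$ arising from the factor $t$, and the two polynomials $p_{2n},p_{2n+1}$ — and check that each lands in exactly the right entry of the $2\times2$ factors of $C_{k,l}$. The one genuinely non-routine ingredient is the closed form for $\beta_{k,l}$, which is why I would isolate it as the master identity and settle it once, via the falling-factorial/finite-difference computation, before touching the matrix assembly.
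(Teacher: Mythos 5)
Your proposal follows essentially the same route as the paper's proof: split $p=p_0(t^2)+t\,p_1(t^2)$, establish the master identity $(q(t^2))^{(l)}=\sum_k b_{k,l}(t)q^{(k)}(t^2)$ (which the paper merely asserts as ``a simple computation''), handle the odd part by Leibniz, use the even/odd product rule for multiplication by $a_l$, and assemble the row-vector identity that stacks into the matrix equation. The only difference is that you supply an explicit derivation of the coefficients $\beta_{k,l}$ via the falling-factorial inversion, which is a welcome addition but not a different method.
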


\begin{proof}

To simplify the
expressions, we again write $p_0=R_{2,0}(p)$ and $p_1=R_{2,1}(p)$, so that $p(x)=p_0(x^2)+xp_1(x^2)$.

A simple computation shows that
$$
(f(x^2))^{(l)}=\sum _{k=[(l+1)/2]}^lb_{k,l}(x)f^{(k)}(x^2),
$$
where
$$
b_{k,l}(x)=\left(\frac{(-1)^k}{k!}\sum_{j=[(l+1)/2]}^k(-1)^j{k\choose j}
2j(2j-1)\cdots (2j-l+1)\right)x^{2k-l},\quad k>l/2.
$$

Applying this formula to $p_n(x)=p_{n,0}(x^2)+xp_{n,1}(x^2)$, we get after some simplifications
that
\begin{align*}
(p_{n,0}&(x^2)+xp_{n,1}(x^2))^{(l)}\\&=\sum_{k=0}^l\left( b_{k,l,0}(x^2)p_{n,0}^{(k)}(x^2)
+\left( x^2b_{k,l,1}(x^2)+lb_{k,l-1,0}(x^2)\right)p_{n,1}^{(k)}(x^2)\right)
\\ &\quad \quad +
x\left(\sum_{k=0}^l\left( b_{k,l,1}(x^2)p_{n,0}^{(k)}(x^2)
+\left( b_{k,l,0}(x^2)+lb_{k,l-1,1}(x^2)\right)p_{n,1}^{(k)}(x^2)\right)\right),
\end{align*}
where we take $b_{k,k'}=0$, when $k>k'$. This can be written in the form:
\begin{align}\label{secc2.1}
(p_{n,0}&(x^2)+xp_{n,1}(x^2))^{(l)}\\&=\nonumber \sum_{k=0}^l \begin{pmatrix}p_{n,0}^{(k)}(x^2)& p_{n,1}^{(k)}(x^2)\end{pmatrix}
\begin{pmatrix}
b_{k,l,0}(x^2)\\ \left( x^2b_{k,l,1}(x^2)+lb_{k,l-1,0}(x^2)\right)
\end{pmatrix}
\\ & \nonumber  \\
& \nonumber \quad \quad +x\sum_{k=0}^l \begin{pmatrix}p_{n,0}^{(k)}(x^2)& p_{n,1}^{(k)}(x^2)\end{pmatrix}
\begin{pmatrix}
b_{k,l,1}(x^2)\\\left( b_{k,l,0}(x^2)+lb_{k,l-1,1}(x^2)\right)
\end{pmatrix}.
\end{align}
If we write
$$
C_{k,l}(x)=\begin{pmatrix}
b_{k,l,0}(x)&b_{k,l,1}(x)\\ xb_{k,l,1}(x)+lb_{k,l-1,0}(x)&
b_{k,l,0}(x)+lb_{k,l-1,1}(x)
\end{pmatrix}\begin{pmatrix}a_{l,0}(x)&a_{l,1}(x)\\xa_{l,1}(x)&a_{l,0}(x)\end{pmatrix},
$$
and take into account that for any polynomials $p,q$ one has $(pq)_0=p_0q_0+xp_1q_1$ and
$(pq)_1=p_0q_1+p_1q_0$, we get from (\ref{secc2.1}) that
\begin{equation}\label{secc2.2}
\begin{pmatrix}\left(a_l(x)p_n^{(l)}(x)\right)_0
\left(a_l(x)p_n^{(l)}(x)\right)_1\end{pmatrix} =\sum_{k=0}^l
\begin{pmatrix}p_{n,0}^{(k)}(x)& p_{n,1}^{(k)}(x)\end{pmatrix}C_{k,l}.
\end{equation}

From here, it follows that the differential equation (\ref{ede}) gives the following matrix equation for
the vector $\begin{pmatrix}p_{n,0}^{(k)}(x)& p_{n,1}^{(k)}(x)\end{pmatrix}$:
$$
\sum _{k=0}^m \begin{pmatrix}p_{n,0}^{(k)}(x)& p_{n,1}^{(k)}(x)\end{pmatrix}\left( \sum_{l=k}^mC_{k,l}\right)=
\gamma_n\begin{pmatrix}p_{n,0}(x)& p_{n,1}(x)\end{pmatrix}.
$$
It is now easy to finish the proof.

\end{proof}

We illustrate the general construction above with an example.

Take the Laguerre polynomials $(L_n^\alpha)_n$, which satisfy the second order differential equation
$$
x(L_n^\alpha (x))''+(\alpha +1-x)(L_n^\alpha (x))'=-nL_n^\alpha (x).
$$
The proof of Theorem \ref{teo2.1} gives for the matrix polynomials
$$
L^\alpha_{n,2\times 2}=\begin{pmatrix}\displaystyle \frac{L_{2n}^\alpha (\sqrt x)+L_{2n}^\alpha (-\sqrt x)}{2}&
\displaystyle \frac{L_{2n}^\alpha (\sqrt x)-L_{2n}^\alpha (-\sqrt x)}{2\sqrt x}\\
\displaystyle \frac{L_{2n+1}^\alpha (\sqrt x)+L_{2n+1}^\alpha (-\sqrt x)}{2}&
\displaystyle \frac{L_{2n+1}^\alpha (\sqrt x)-L_{2n+1}^\alpha (-\sqrt x)}{2\sqrt x}
\end{pmatrix}
$$
the matrix differential equation
\begin{align*}
\left(L^\alpha_{n,2\times 2}\right) ''&\begin{pmatrix}0&4x\\4x^2&0\end{pmatrix}+
\left(L^\alpha_{n,2\times 2}\right) '\begin{pmatrix}-2x&2\alpha +4\\(8+2\alpha)x&-2x\end{pmatrix}+
L^\alpha_{n,2\times 2}\begin{pmatrix}0&0\\\alpha +1&-1\end{pmatrix}\\
&\quad \quad\quad\quad =
\begin{pmatrix}-2n&0\\0&-2n-1\end{pmatrix} L^\alpha_{n,2\times 2}.
\end{align*}
In this case, the sequence $(L^\alpha_{n,2\times 2})_n$ is orthogonal with respect to the weight
matrix
$$
e^{-\sqrt x}\begin{pmatrix}\displaystyle \frac{1}{\sqrt x} &1\\
1&\sqrt x
\end{pmatrix} .
$$
For any size $N$, the matrix polynomials $(L^\alpha_{n,N\times N})_n$ defined from the Laguerre polynomials
by (\ref{matrixpol}) satisfy the matrix second order differential equation
$$
\left(L^\alpha_{n,N\times N}\right) ''A_2(x)+
\left(L^\alpha_{n,N\times N}\right) 'A_1(x)+
L^\alpha_{n,2\times 2}A_0=
\begin{pmatrix}\gamma_{nN}&0&\cdots &0\\
0&\gamma_{nN+1}&\cdots &0\\\vdots&\vdots&\ddots&\vdots \\
0&0&\cdots&\gamma_{nN+N-1}\end{pmatrix} L^\alpha_{n,2\times 2},
$$
where
$$
A_2(x)=\begin{pmatrix}0&0&0&\cdots &0&4\\
4x^2&0&0&\cdots &0&0\\0&4x^2&0&\cdots &0&0\\
\vdots&\vdots&\vdots&\ddots&\vdots&\vdots \\
0&0&0&\cdots&0&0\\
0&0&0&\cdots&4x^2&0\end{pmatrix},
$$
$$
A_1(x)=\begin{pmatrix}-x&0&0&\cdots &0&4+\alpha /2\\
(N+2+\alpha)x&-x&0&\cdots &0&0\\0&(N+4+\alpha)x&-x&\cdots &0&0\\
\vdots&\vdots&\vdots&\ddots&\vdots&\vdots \\
0&0&0&\cdots&-x&\\
0&0&0&\cdots&(N+2(N-1)+\alpha)x&-x\end{pmatrix},
$$
and
$$
A_0(x)=\begin{pmatrix}N-1/N^2&0&0&\cdots &0&0\\
\alpha+1/N^2&N-2/N^2&0&\cdots &0&0\\0&2\alpha+2^2/N^2&N-3/N^2&\cdots &0&0\\
\vdots&\vdots&\vdots&\ddots&\vdots&\vdots \\
0&0&0&\cdots&1/N^2&0\\
0&0&0&\cdots&(N-1)\alpha +(N-1)^2/N^2&0\end{pmatrix}.
$$

\section{A look at some examples involving higher order operators}

 In section 2 we have shown that a scalar valued sequence of polynomials
   that are common eigenfunctions of a given differential operator gives rise,
   in a canonical fashion,
   to a sequence of matrix valued polynomials and to a differential operator
   that has them as common eigenfunctions.

   The purpose of this section is to illustrate a number of issues suggested
   by this general result in the case of some examples. These examples deal with
   what are called extended Krall-Laguerre and extended Krall-Jacobi polynomials
   in [GH].
   In both cases the scalar polynomials we consider satisfy a unique fourth
   order differential equation which therefore gives rise to a fourth order differential
   operator with matrix coefficients. The general theme of this section centers around
   the existence of other matrix valued differential operators having these same
   polynomials as common eigenfunctions very much in the spirit of [CG2].

\subsection{The scalar extended Krall-Laguerre polynomials}

The scalar valued (extended)  Krall--Laguerre polynomials $p_n(x)$ first introduced in \cite{GH} are given, for $n = 0,1,2,\dots$, by
\[
p_n(x) = \frac {1}{x} (q_{n+1}(x) + x_{n+1}q_n(x) + y_nq_{n-1}(x))
\]
where
\begin{align}\label{xey1}
x_n &= \alpha + \frac {2n^2+2(R-1)n-R}{(n-1)+R} \\
\label{xey2} y_n &= \frac {n(n+\alpha)(n+1+R)}{n+R}
\end{align}
and $q_n(x) = L^\alpha _n(x)$ stand for the familiar Laguerre polynomials given by
\[
L^\alpha _n(x) = (x-(2n-1+\alpha))L^\alpha _{n-1}(x) - (n-1)(n-1+\alpha)L^\alpha _{n-2}(x)
\]
and $L^\alpha _{-1}(x) = 0$, $L^\alpha _0(x) = 1$.

Here $R$ is a free parameter, and so is $\alpha$, which is taken to be larger than $-1$.

The $p_n(x)$ are obtained in \cite{GH} by an
application of the Darboux process starting from the Laguerre polynomials $L^\alpha _n(x)$ which are orthogonal and
satisfy the three term recursion relation given above. For generic values of $\alpha > -1$ the $p_n(x)$ do
not satisfy such a three term recursion relation, but they still satisfy
the five term recursion relation given by
\begin{equation}\label{ttrrKL}
\begin{pmatrix}
x_1 & 1 \\
y_1 & x_2 & 1 \\
& y_2 & x_3 &\cdot \\
& &\cdot &\cdot &\cdot \\
& & &\cdot &\cdot &\cdot \\
& & & &\cdot &\cdot &\cdot
\end{pmatrix} \quad \begin{pmatrix}
{\bar x}_1 & 1 \\
{\bar y}_1 & {\bar x}_2 & 1 \\
& {\bar y}_2 & {\bar x}_3 &\cdot \\
& &\cdot &\cdot &\cdot \\
& & &\cdot &\cdot &\cdot \\
& & & &\cdot &\cdot &\cdot
\end{pmatrix} \quad \begin{pmatrix}
p_0(x) \\
p_1(x) \\
p_2(x) \\
\cdot \\
\cdot \\
\cdot
\end{pmatrix} = x^2 \begin{pmatrix}
p_0(x) \\
p_1(x) \\
p_2(x) \\
\cdot \\
\cdot \\
\cdot
\end{pmatrix}
\end{equation}

where $x_n$ and $y_n$ are given above and ${\bar x}_n$ and ${\bar y}_n$ are given as follows
\begin{align}\label{xeybar1}
{\bar x}_n &= \alpha + \frac {2n^2 + 2(R-1)n-R}{n+R} \\
\label{xeybar2} {\bar y}_n &= \frac {n(n+\alpha)(n-1+R)}{n+R}
\end{align}
The point of the construction in \cite{GH} is that the new family of polynomials
$p_n(x)$ are common eigenfunctions of a fixed fourth order differential operator in the
spectral parameter $x$, as will be seen shortly.

 The method discussed in \cite{GH} shows that a proper use of the
Darboux process will take us from a trivial bispectral situation, i.e. the Laguerre polynomials
$L^\alpha _n(x)$ are eigenfunctions of a fixed second order differential operator and they are the common
eigenvectors of a tridiagonal difference operator, to a much less trivial one: the polynomials
$p_n(x)$ feature in a higher order bispectral situation that is not a trivial consequence of the
original one.

In our case we have
\begin{align}\label{fode}
[x^2D^4 - 2x(x-\alpha-2)D^3 &+ (x(x-2(R+\alpha)-6) + \alpha(\alpha+3))D^2 \\\nonumber
&+ 2((R+1)x - (\alpha+1)R-\alpha)D + R(R+1)]p_n(x) \\\nonumber
&= (R+n)(R+n+1)p_n(x).
\end{align}
We now produce out of these $p_n(x)$ a sequence of matrix valued polynomials of size $2 \times 2$
using the method explained in Section 2.

For concreteness we display below $P_0(x)$ and $P_1(x)$.  We have
\begin{align*}
P_{0}(x) &= \begin{pmatrix}
1 & 0 \\
-\frac {\alpha+(\alpha+1)R}{R+1} & 1
\end{pmatrix} \\
P_{1}(x) &= \begin{pmatrix}
x+\frac {(\alpha+2)((\alpha+1)R+2\alpha)}{R+2} & -\frac {2((\alpha+2)R+2\alpha+3)}{R+2} \\
-3x\frac {((\alpha+3)R+3\alpha+8)}{R+3} -\frac {(\alpha+2)(\alpha+3)((\alpha+1)R+3\alpha)}{R+3}
& x+\frac {3(\alpha+3)((\alpha+2)R+3\alpha+4)}{R+3}
\end{pmatrix}
\end{align*}

Here we display a fourth order differential operator $B$, arising by
applying the method explained in the previous section,  that has the matrix valued polynomials
$(P_{n}(x))_n$ as a set of common eigenfunctions

\begin{align*}
B &= D^4 x^3I + D^3 \begin{pmatrix}
(\alpha+5)x^2 &-x^2   \\
-x^3& (\alpha+7)x^2
\end{pmatrix}  + D^2 \begin{pmatrix}
\frac {x^2+ x(\alpha^2+9\alpha+15)}{4} & -\frac {(R+\alpha+6)x}{2}\\
-\frac {(R+\alpha+9)x^2}{2}  &\frac {x^2+(\alpha^2+15\alpha+39)x}{4}
\end{pmatrix} \\
&\quad \quad\quad +  D \begin{pmatrix}
\frac {(2R+3)x}{8}  + \frac {\alpha(\alpha+3)}{8} & -\frac {(\alpha+2)R+2\alpha+3}{4}\\
-x\frac {(\alpha+4)R+4(\alpha+3)}{4}&x\frac {2R+5}{8} + \frac {3(\alpha+1)(\alpha+4)}{8}
\end{pmatrix}  + \begin{pmatrix}
-\frac {R+1}{8} & 0\\
-\frac {(\alpha+1)R+\alpha}{8}  & 0
\end{pmatrix}.
\end{align*}

The matrix valued eigenvalue is read-off from the expression
\[
P_{n}(x) B= \left[ \begin{pmatrix}
-\frac {R+1}{8} & 0 \\
0 & 0
\end{pmatrix} + \begin{pmatrix}
\frac {2R+1}{8} & 0 \\
0 & \frac {2R+3}{8}
\end{pmatrix} n + \frac {n^2}{4} I\right] P_{n}(x)
\]

\bigskip

Notice that our differential operator acts on the right on its argument. This is also the reason why the eigenvalue matrix appears before the eigenfunction $P_n.$

\bigskip

Observe that the differential operator above depends both on $R$ and $\alpha$ while the eigenvalue
does not. If one were to consider, for a fixed pair $R, \alpha$ the family of matrix valued
polynomials, and then in the spirit of \cite{CG2} the full algebra we would
end up getting
isomorphic algebras of differential operators with matrix coefficients for different values of $\alpha$.
This follows from the fact that these algebras of differential operators are isomorphic to the
algebra of matrix valued eigenvalues.

\bigskip

\subsection{Matrix extended Krall-Laguerre and orthogonality}

The sequence of matrix polynomials $(P_{n})_n$ is not always orthogonal with respect to a weight
matrix of measures. This only happens if each polynomial $P_{n}$ can be normalized using a
nonsingular matrix: $Q_{n}=\Gamma _nP_{n }$ so that $(Q_{n })_n$ is
an orthonormal sequence with respect to a weight matrix. In that case,the sequence $(Q_{n})_n$
satisfies a three term recurrence relation of the form
$$
xQ_{n}=A_{n+1}Q_{n+1}+B_{n}Q_{n}+A_{n}^*Q_{n-1},
$$
with $A_{n}$ non singular and $B_{n}$ Hermitian.

If we were dealing with a monic sequence of polynomials $P_n$ satisfying a three
term recursion, such as

$$
xP_{n}=P_{n+1}+B_{n}P_{n}+A_{n}P_{n-1},
$$

this symmetrization can be done only when one can find positive definite matrices $S_n$ such that $B_n S_n$ is Hermitian and $A_n S_{n+1}=S_n$.

\bigskip

As we noticed at the end of section 2, given a scalar family of polynomials $p_n(x)$
we could attempt a scalar symmetrization of the (higher order) recursion relation.
Given the Krall-Laguerre polynomials $(p_{n})_n$ we can look for a normalization
$r_{n}(x)=\tau_np_{n}(x)$ satisfying a
symmetric five term recurrence relation of the form
\begin{equation}\label{ftrr}
x^2r_{n}=a_{n+2}r_{n+2}+b_{n+1}r_{n+1}+
c_{n}r_{n}+b_{n}r_{n-1}+a_{n}r_{n-2},
\end{equation}
where $a_{n}\not =0$, $n\ge 0$.

If we use the parameters
$(x_n)_n$, $(y_n)_n$, $(\bar x_n)_n$ and $(\bar y_n)_n$ (see
(\ref{xey1}), (\ref{xey2}), (\ref{xeybar1}) and (\ref{xeybar2})) it turns out that we would need

$$
y_{n+1}\bar y_n=
\frac{(y_n\bar x_n+x_{n+1}\bar y_n)(y_{n+1}\bar x_{n+1}+x_{n+2}\bar y_{n+1})}{((x_n+\bar x_{n+1})(x_{n+1}+\bar x_{n+2})}.
$$
This only happens when $\alpha =0$ and then:
\begin{align*}
a_{n+2,R,0}&=\displaystyle (n+1)(n+2)\sqrt {\frac{(n+R)(n+R+3)}{(n+R+1)(n+R+2)}},\\&\\
b_{n+1,R,0}&=\displaystyle 2(n+1)\frac{2(n+1)R^2+(2n+1)(2n+3)R+2n(n+1)(n+2)}{(n+R+1)\sqrt{(n+R)(n+R+2)}},\\&\\
c_{n,R,0}&=\displaystyle \frac{n^2(n+R+1)^2+(n+1)^2(n+R)^2+((2n(n+1)+(2n+1)R)^2}{(n+R)(n+R+1)}.
\end{align*}
For $\alpha =0$, the five terms recurrence relation (\ref{ftrr}) is just an iteration of
the three term recurrence relation for the Laguerre type orthogonal polynomials
$(L_n^{R})_n$ introduced by Koornwinder (see \cite{Koo} and also \cite{KoKo}). They are orthogonal
in $[0,+\infty )$ with respect to the measure $e^{-x}+\frac{1}{R}\delta_0$. The four order
differential equation (\ref{fode}) reduces to the well-known four order differential equation for this family
(see, for instance, \cite{LK}, p.112).

The corresponding matrix polynomials $(P_{n,R,0})_n$ are then orthogonal in $[0,+\infty)$
with respect to the weight matrix
$$
e^{-\sqrt x}\begin{pmatrix}\displaystyle \frac{1}{\sqrt x}+\frac{2}{R}\delta_0 &1\\
1&\sqrt x
\end{pmatrix} .
$$

\bigskip

In this case one can see that the more general attempt to symmetrize the matrix
valued recursion relation fails too, except in the case $\alpha=0$.

\bigskip

\subsection{The scalar extended Krall-Jacobi case}

The scalar valued (extended) Krall--Jacobi polynomials $p_n(x)$,
first introduced in [GH] are given by
$$
q_n(x) = \frac {1}{x+1} (y_np_{n-1}(x) + x_{n+1}p_n(x) + p_{n+1}(x))
$$
where, as in $(4.11)$ and $(4.12)$ in [GH], we have
\begin{align*}
x_n &= \frac {2(\alpha+\beta+n)(\alpha\beta + \beta^2 + 2n\alpha +
2(n-1)\beta +
2n(n-1))}{(\alpha+\beta+n-1)(\alpha+\beta+2n-2)(\alpha+\beta+2n)} \\
& \quad\quad \quad\quad - \frac
{2(\alpha+n-1)R}{(\alpha+\beta+n-1)(\alpha+\beta+2n-2)\theta_{n-1}} \\
y_n &= a_n \frac {\theta_{n+1}}{\theta_n}
\end{align*}
and
$$
\theta_n = n^2 + (\alpha+\beta)n + R.
$$
Here $p_n(x)$ stands for the usual (monic) Jacobi polynomials satisfying
$$
xp_n(x) = a_n p_{n-1}(x) + b_{n+1}p_n(x) + p_{n+1}(x)
$$
with
\begin{align*}
a_n &= \frac
{4n(n+\alpha+\beta)(n+\alpha)(n+\beta)}{(2n+\alpha+\beta-1)(2n+\alpha+\beta)^2(2n+\alpha+\beta+1)}
\\
b_n &= \frac {\beta^2-\alpha^2}{(2n+\alpha+\beta-2)(2n+\alpha+\beta)}.
\end{align*}

As can be seen in \cite{GH}, the crux of the construction of these polynomials is that the polynomials $p_n(x)$ and $q_n(x)$ are also related by means of

$$
p_n(x) = \frac {1}{x+1} (\bar y_nq_{n-1}(x) + \bar x_{n+1}q_n(x) + q_{n+1}(x))
$$

As explained in \cite{GH} the expressions for $\bar x_n$ and $\bar y_n$ can be obtained by
replacing $b_n$ by $b_n +1$ in (3.17) of that paper. The resulting expression for
$\bar x_n$ is correctly given by the first part of (4.13), but there is an error in the
expression for $\bar y_n$ reported in \cite{GH}, and we give the correct expression
below, namely

$$\bar y_n=x_{n+1}(x_{n+2}-b_{n+1}-b_{n+2}-2)-y_{n+1}+a_n+a_{n+1}+(b_{n+1}+1)^2$$

For completness we reproduce the expression for $\bar x_n$ from \cite{GH}, namely

\begin{align*}
\bar x_n &= \frac {2(\alpha+\beta+n-1)(\alpha\beta + \beta^2 + 2(n-1)\alpha +
2n\beta +
2n(n-1))}{(\alpha+\beta+n)(\alpha+\beta+2n-2)(\alpha+\beta+2n)} \\
& \quad\quad \quad\quad + \frac
{2(\alpha+n)R}{(\alpha+\beta+n)(\alpha+\beta+2n)\theta_{n}} 
\end{align*}

Finally $R$ is a free parameter. 

\bigskip

These polynomials $q_n(x)$ satisfy,
for generic values of $\alpha,\beta$, a five term recursion relation
and they are the common eigenfunctions of a fourth order differential
operator $PQ$ with
$$
P = (x^2-1)D^2 + ((\alpha+\beta+1)x-\beta+\alpha+1)D+R
$$
and
$$
Q = (x^2-1)D^2 + ((\alpha+\beta+3)x-\beta+\alpha-1)D + (R+\alpha+\beta+1).
$$
More explicitly we have
$$
PQ q_n(x) = \lambda_n q_n(x)
$$
with
$$
\lambda_n = (R+n(\alpha+\beta+n))(R+(n+1)(\alpha+\beta+n+1)).
$$

\bigskip

The five term recursion relation expresses the product $$(x+1)^2 q_n(x)$$ as
a linear combination of the polynomials $q_{n+2},q_{n+1},q_n,q_{n-1},q_{n-2}$.

\bigskip

One could raise again the possibility that this five recursion could be made
symmetric by premultiplying the scalar polynomials by appropriate $n$ dependent 
constants. This would require, as in the case of the modified Krall-Laguerre 
polynomials that a certain identity should hold among the quantities $x_n,y_n,\bar x_n,\bar y_n$. 
An explicit computation using the expressions above shows that this happens only
for $\beta=0.$

If one obtains the matrix valued polynomials out of these scalar ones and tries
to symmetrize the resulting three term recursion relation one finds, once again, that
in this case nothing is gained by allowing this more general cure to our problem:
there is a positive definite orthogonality matrix only in the case of $\beta=0.$

\bigskip

We consider now (as an example) the special case given by $\alpha = 3/2$, $\beta =
7/8$, $R = 7$.  In this case the scalar fourth order operator
mentioned earlier becomes
$$
(x^2-1)^2D^4 + \frac {(x^2-1)(51x+5)}{4} D^3 + \frac {7}{64} (503x^2
+ 98x - 281)D^2 + \frac {35}{32} (83x + 21)D.
$$

The general method indicated  earlier produces a fourth order
differential operator with matrix coefficients satisfied by the
matrix valued polynomials obtained from the sequence $p_n(x)$.  In
the spirit of [CG2] one can study the algebra of all differential
operators associated with this family of matrix valued polynomials.

In this example we have checked that the algebra contains only one
(up to scalars) differential operator of order four.  If we look for
operators of order less than or equal to six, we find only one such
operator (modulo operators of order $4$), whose highest order terms
are
\begin{align*}
D^6 &\begin{pmatrix}
(x-1)^3x^3 & 0 \\
0 & (x-1)^3x^3
\end{pmatrix} \\ &\quad \quad+ \frac {3}{16} D^5 \begin{pmatrix}
(x-1)^2x^2(107x-40) & 5(x-1)^2x^2 \\
5(x-1)^2x^3 & (x-1)^2x^2(123x-56)
\end{pmatrix} + \dots
\end{align*}
In particular the algebra cannot be generated by the fourth order
operator constructed out of the scalar one.
As mentioned in the introduction, this also gives examples of situations where the
lowest order of a (nontrivial) differential operator in the algebra is higher than
two.

Extensive computations suggest that these two observations hold for
generic values of $\alpha,\beta,R$. Just as in a previous example, and all the
examples below, all differential operators with matrix valued coefficients act on the right of their matrix valued arguments.

\bigskip

\subsection{Lower order operators connected to the extended Krall-Jacobi case}

A remarkable phenomenon takes place when $\alpha$ and $\beta$ differ
by one.  Consider the case $\beta = \alpha + 1$.  Here, very much as
in the generic case, we find that the scalar extended Krall--Jacobi
polynomials are the common eigenfunctions of only one fourth order
differential operator, namely
\begin{align*}
(x^2-1)^2D^4& + 2(1-x^2)((2\alpha+5)x-1)D^3 \\
&\quad\quad-\ 2(x((2\alpha^2+11\alpha+19)x-2\alpha-2)-3\alpha - 13)D^2 \\
&\quad\quad\quad\quad-\ 2((2\alpha+3)(2\alpha+9)x-7)D.
\end{align*}
A surprise comes about when we build the matrix valued polynomials
going along with the scalar ones alluded to above.

\bigskip

 We consider the
differential operators with matrix coefficients and order not
exceeding three.  Notice that for generic values of $\alpha,\beta$
this gives only the identity operator.  The situation now is entirely
different.  We find four nontrivial operators, which along with
identity, give a basis for this five dimensional space.

The matrix valued polynomials $P_n(x)$ are common eigenfunctions of
two linearly independent third order operators.  But there are other
surprises in store.  They also satisfy differential equations of orders one and two,
namely, recalling that $\beta=\alpha+1$,
$$
P'_n(x) \begin{pmatrix}
0 & x-1 \\
0 & 0
\end{pmatrix} + P_n(x) \begin{pmatrix}
1 + \frac {R}{2\beta} & \beta - \frac {R}{2\beta} \\
0 & 0
\end{pmatrix} = \left[ \begin{pmatrix}
1 + \frac {R}{2\beta} & \beta - \frac {R}{2\beta} \\
0 & 0
\end{pmatrix} + n \begin{pmatrix}
0 & 1 \\
0 & 0
\end{pmatrix} \right] P_n
$$
as well as
\begin{align*}
P''_n(x) \begin{pmatrix}
0 & x(x-1) \\
0 & x(x-1)
\end{pmatrix} &+ P'_n(x) \begin{pmatrix}
0 & x+\alpha \\
0 & \frac {(2\alpha+5)x-3}{2}
\end{pmatrix} + P_n(x)A_0 \\
&= \left[ A_0 + n \begin{pmatrix}
0 & 0 \\
0 & \frac {2\alpha+3}{2}
\end{pmatrix} + n^2 \begin{pmatrix}
0 & 1 \\
0 & 1
\end{pmatrix} \right] P_n(x)
\end{align*}
where
$$
A_0 = \begin{pmatrix}
-\frac {4\alpha+5}{2} & -(\alpha+1)^2 \\
0 & 0
\end{pmatrix} + \frac {R(3\alpha+4)}{4(\alpha+1)} \begin{pmatrix}
-1 & 1 \\
0 & 0
\end{pmatrix}.
$$

\bigskip

   \end{document}